\newtheorem{thm}{Theorem}[section]
\newtheorem{lem}[thm]{Lemma}
\numberwithin{equation}{section}
\begin{document}

\title{Acyclic Subgraphs in $k$-Majority Tournaments}

\author{Alexandra Ilic}

\address{Westlake High School, Austin, TX 78746}

\author{Lilly Shen}

\address{Clements High School, Sugar Land, TX 77479}

\author{Bobby Shen}

\address{Dulles High School, Sugar Land, TX 77478}

\author{Jian Shen}

\address{Department of
Mathematics, Texas State
University, San Marcos, TX 78666}

\email{js48@txstate.edu}

\thanks{Corresponding author. J.~Shen was partially supported by NSF (CNS 0835834, DMS 1005206) and Texas Higher Education Coordinating Board (ARP 003615-0039-2007).}


\maketitle

\begin{abstract}
A $k$-majority digraph is a directed graph created by combining $k$ individual rankings on the same ground set to form a consensus where edges point in the direction indicated by a strict majority of the rankings.
 The $k$-majority digraph is used to model voting scenarios, where the vertices correspond to options ranked by $k$ voters. When $k$ is odd, the resulting digraph is always a tournament, called $k$-majority tournament. Let $f_k(n)$ be the minimum,
over all $k$-majority tournaments with $n$ vertices, of the maximum order of an induced transitive sub-tournament. Recently, Milans, Schreiber, and West proved that $\sqrt n \le f_3(n) \le 2 \sqrt n +1 $. In this paper, we improve the upper bound of $f_3(n)$ by showing that $f_3(n) < \sqrt {2n} +\frac 12 $.
\end{abstract}

\section{Introduction}
Let $\Pi =\{ \pi _1, \ldots, \pi_k\}$ be a set of $k$ linear orders on a ground set $V$.
The {\em $k$-majority digraph} of $\Pi$ has vertex set $V$ and, for any two vertices $u$ and $v$ in $V$, there is an edge from $u$ to $v$ if and only if a strict majority of these
$k$ linear orders rank $u$ before $v$.
The $k$-majority digraph is used to model voting scenarios, where the vertices correspond to options ranked by $k$ voters.
When $k$ is odd, the resulting digraph is always a tournament, called {\em $k$-majority tournament}.

\vspace{3mm}

A set of vertices in a directed graph $G$ is call {\em acyclic} if the subgraph induced
by the set contains no cycle. Let
$a(G)$ denote the maximum size of an acyclic set in $G$. Recently,  Milans, Schreiber, and West \cite{MilansSW} defined the following parameter:
$$f_k(n) = \min \{ a(T): T \mbox{ is an } n \mbox{-vertex } k \mbox{-majority tournament} \};$$
that is, $f_3(n)$ is the minimum,
over all $k$-majority tournaments with $n$ vertices, of the maximum order of an induced transitive sub-tournament. Milans, Schreiber, and West \cite{MilansSW} proved that
$\sqrt n \le f_3(n) \le 2 \sqrt n +1 $. In order to prove their lower bound on $f_3(n)$, Milans, Schreiber, and West made the following definition. A set $X \subseteq V$ is called $\Pi$-{\em consistent} if it appears in the same order in each linear order of $\Pi$; and similarly, $X$ is called $\Pi$-{\em neutral} if $|\Pi|=k$ is even and, for all distinct $u$, $v$ in $X$, the element $u$ appears before $v$ in exactly half the members of $\Pi$. The lower bound on $f_3(n)$ then was proved by
applying the following Erd\"os-Szekeres theorem.

\begin{thm} [Erd\"os-Szekeres \cite{ErdosS}]  \label{condition}
Given linear orders $\pi_1$ and $\pi_2$ of a set $V$ with $|V| > (r-1)(s-1),$
there is either a $\{\pi_1 , \pi_2\}$-consist set of size $r$ or a $\{\pi_1 , \pi_2\}$-neutral set of size $s$.
\end{thm}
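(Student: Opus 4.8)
The plan is to prove this by the standard pigeonhole labeling argument that underlies the Erd\"os--Szekeres theorem, translated into the language of $\{\pi_1,\pi_2\}$-consistent and $\{\pi_1,\pi_2\}$-neutral sets. The observation that makes everything go through is the dichotomy available when $k=2$: every pair $\{u,v\}$ is \emph{either} consistent \emph{or} neutral, with no third option. Indeed, if $u$ precedes $v$ in both $\pi_1$ and $\pi_2$ the pair is consistent, while if $u$ precedes $v$ in $\pi_1$ but follows it in $\pi_2$ the pair is neutral. This is precisely the feature that lets a single labeling of $V$ detect both structures simultaneously.

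First I would fix the enumeration $v_1,\dots,v_N$ of $V$ in the order given by $\pi_1$, where $N=|V|$. To each element $v$ I attach a pair of positive integers $(a_v,b_v)$, where $a_v$ is the maximum size of a consistent set whose $\pi_1$-last element is $v$, and $b_v$ is the maximum size of a neutral set whose $\pi_1$-last element is $v$; both are at least $1$ because of the singleton $\{v\}$. The crux is then to show that the map $v\mapsto(a_v,b_v)$ is injective. Given distinct $u,v$ with $u$ preceding $v$ in $\pi_1$, the dichotomy above gives two cases: if $\{u,v\}$ is consistent, appending $v$ to a largest consistent set ending at $u$ yields $a_v\ge a_u+1$; if $\{u,v\}$ is neutral, appending $v$ to a largest neutral set ending at $u$ yields $b_v\ge b_u+1$. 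In either case $(a_u,b_u)\neq(a_v,b_v)$, so the labeling is injective.

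Finally I would run the counting step. Suppose for contradiction that there is no consistent set of size $r$ and no neutral set of size $s$. Then $1\le a_v\le r-1$ and $1\le b_v\le s-1$ for every $v$, so all labels lie in a set of at most $(r-1)(s-1)$ pairs. Since the labeling is injective and $N>(r-1)(s-1)$, this is impossible. Hence some $a_v\ge r$ or some $b_v\ge s$, producing the desired consistent or neutral set.

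I expect the main obstacle to be verifying the two ``append'' steps used in the injectivity argument, since one must confirm that adjoining the single element $v$ to a witness ending at $u$ again yields a set of the correct type. The consistent case is immediate, as $v$ is both $\pi_1$-after and $\pi_2$-after every element of the witness. The neutral case is the delicate one: because the witness $X$ is neutral, its $\pi_1$-last element $u$ is its $\pi_2$-first element, so every $x\in X$ satisfies that $u$ is $\pi_2$-before $x$; combining this with $v$ being $\pi_2$-before $u$ and using transitivity of $\pi_2$ shows $v$ is $\pi_2$-before each $x$, while $v$ is $\pi_1$-after each $x$, so $X\cup\{v\}$ is again neutral. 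Once this transitivity bookkeeping is pinned down, the pigeonhole conclusion is routine.
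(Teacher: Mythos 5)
Your proof is correct. Note that the paper itself offers no proof of this statement; it is quoted as a known result of Erd\H{o}s and Szekeres, so there is no in-paper argument to compare against. What you give is the standard pigeonhole labeling proof, correctly translated into the consistent/neutral language: the key dichotomy (every pair is consistent or neutral when $k=2$, since the pair's relative order in $\pi_2$ either agrees or disagrees with that in $\pi_1$) is exactly right, the injectivity of $v\mapsto(a_v,b_v)$ is established properly, and you correctly identified and resolved the one delicate point, namely that in a neutral witness $X$ the $\pi_1$-last element is the $\pi_2$-first element, so transitivity of $\pi_2$ lets you adjoin $v$ and preserve neutrality. The counting step is routine and handled correctly.
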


To prove their upper bound on $f_3(n)$, Milans, Schreiber, and West \cite{MilansSW}
considered a special case when $n =r^2$ is a perfect square and
constructed three linear orders on $r^2$ lattice points arranged by a square with a side length of $(r-1)$. In this paper, we improve the upper bound for $f_3(n)$ by showing that $f_3(n) < \sqrt {2n} +\frac 12 $. Our proof strategy is to consider a special case when $n =r(r+1)/2 $ and
construct three linear orders on $r(r+1)/2$ hexagonal lattice points arranged by an equivalent triangle with a side length of $(r-1)$.

\section{Proof of Main Result}
The hexagonal lattice is a regular, repeating grid of points in the plane in which each point in the lattice is $1$ unit away from each of its $6$ nearest neighbors.
Let $T_r$ be an equilateral triangular portion of the hexagonal lattice with a side length of $(r-1)$ units. So $T_r$ has $r$ points on each side and altogether has a total of $1+2 + \ldots + r = r(r+1)/2$ points. An easy exercise in plane geometry shows that the sum of three distances from each given point to the sides is a constant. This
suggests that we may label these $r(r+1)/2$ points in $T_n$ by $(x, y, z)$-coordinates
with the constraint that $x+y+z = r-1$. In particular, the three vertices of $T_n$ will be labeled by $(r-1, 0, 0)$, $(0, r-1, 0)$, and $(0,0, r-1)$, respectively; and each
other point in $T_r$ will be labeled accordingly by its normalized distances to the three sides of the triangle $T_r$.

\begin{lem} The equation $x+y+z = r-1$ has $r(r+1)/2$ integer solutions and further
all these integer solutions can be viewed as hexagonal lattice points in $T_r$.
\end{lem}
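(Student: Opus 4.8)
The plan is to split the statement into two essentially independent parts: a combinatorial count of the solutions, and a geometric identification of those solutions with lattice points of $T_r$. For the count, I would first observe that the normalized-distance interpretation forces each of $x,y,z$ to be a nonnegative integer, so we are counting nonnegative integer solutions of $x+y+z=r-1$. This is a standard stars-and-bars problem: the number of such solutions is $\binom{(r-1)+2}{2}=\binom{r+1}{2}=r(r+1)/2$, which is exactly the claimed count and matches the number of points of $T_r$ already tallied in the text as $1+2+\cdots+r$.

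For the geometric identification, I would set up an explicit affine parametrization of the hexagonal lattice. Choose unit vectors $\mathbf{u}$ and $\mathbf{v}$ meeting at a $60^\circ$ angle, so that the integer combinations $a\mathbf{u}+b\mathbf{v}$ with $a,b\in\mathbb{Z}$ generate the lattice and every point has exactly six nearest neighbors at unit distance. Placing one corner of $T_r$ at the origin and aligning the two sides emanating from it with $\mathbf{u}$ and $\mathbf{v}$, the lattice points lying in the closed triangle $T_r$ are precisely those of the form $a\mathbf{u}+b\mathbf{v}$ with $a,b\ge 0$ and $a+b\le r-1$. Sending such a point to the triple $(a,\,b,\,r-1-a-b)$ then defines a map into the solution set of $x+y+z=r-1$, and this map is a bijection: the pair $(a,b)$ determines the point, while conversely any nonnegative solution $(x,y,z)$ yields $(a,b)=(x,y)$ with $z=r-1-a-b\ge 0$ guaranteed by the equation.

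To reconcile this parametrization with the normalized-distance labeling described before the lemma, I would verify that the coordinates $(x,y,z)$ of each lattice point agree, up to the common normalization, with its distances to the three sides of $T_r$. Since the quoted plane-geometry fact already guarantees that these three distances sum to a constant, it suffices to fix the normalization at the three corners, which must receive the labels $(r-1,0,0)$, $(0,r-1,0)$, and $(0,0,r-1)$, and to note that each side-distance is an affine function of $a$ and $b$; both checks are immediate from the affine frame. I expect the main obstacle to be this geometric step rather than the count: one must choose the basis and the placement of $T_r$ so that the unit-distance nearest-neighbor property holds at the same time that the boundary lattice points land exactly on the three sides, and then confirm that no lattice point outside the triangle is sent into the solution set. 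Once the affine frame is pinned down, both the bijection and the distance interpretation follow by routine linear algebra.
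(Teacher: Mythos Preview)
The paper states this lemma without proof; it is presented as an elementary observation following directly from the preceding paragraph, where the labeling of points of $T_r$ by normalized side-distances is introduced. Your proposal therefore supplies an argument where the paper offers none.

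Your argument is correct. The stars-and-bars count $\binom{r+1}{2}=r(r+1)/2$ handles the enumeration, and your affine parametrization $a\mathbf{u}+b\mathbf{v}$ of the triangular (hexagonal) lattice, together with the bijection $(a,b)\mapsto(a,b,r-1-a-b)$, cleanly identifies the solution set with the lattice points of $T_r$. The one point worth tightening is your remark that the lemma as stated says ``integer solutions'' rather than ``nonnegative integer solutions'': you rightly infer nonnegativity from the distance interpretation, but you might say so explicitly, since the count is false without that restriction. Beyond that, the geometric verification you sketch (affinity of the side-distance functions, agreement at the three corners) is exactly the right way to reconcile your coordinates with the paper's labeling, and no further idea is needed.
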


\begin{thm} \label{triangle}
If $n = r(r+1)/2$ for some integer $r$, then $f_3(n ) \le r$.
\end{thm}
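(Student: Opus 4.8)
The plan is to construct three explicit linear orders on the $r(r+1)/2$ points of $T_r$ and to show that the resulting $3$-majority tournament contains no acyclic (transitive) set of size larger than $r$. Since $f_3(n)$ is a minimum over all $n$-vertex $3$-majority tournaments, exhibiting a single such tournament whose largest acyclic set has size at most $r$ immediately yields $f_3(n) \le r$.

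For the construction I would exploit the three coordinates directly. Each point carries a label $(x,y,z)$ with $x+y+z = r-1$, so there are three natural "sweep" directions available. The idea is to let the three voters order the points by moving a sweep line across the triangle from each of the three sides in turn: voter~$1$ ranks points primarily by their $x$-coordinate, voter~$2$ primarily by $y$, and voter~$3$ primarily by $z$, with ties along a sweep line broken by a fixed secondary rule (and a consistent tertiary rule) so that each $\pi_i$ is a genuine linear order. The key design goal is symmetry: because $x+y+z$ is constant, the three coordinate directions are interchangeable, and I would choose the tie-breaking so that the cyclic symmetry of the triangle is reflected in the three orders. Under such a construction, the majority edge between two points $u$ and $v$ should be governed by which of the three coordinate differences agree in sign, so that an edge $u\to v$ points "toward the side that a majority of coordinates favor."

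The heart of the argument is the upper bound on acyclic sets. Here I would show that any acyclic (transitive) subset $X$ of the tournament must lie along a single geometric "direction" in the triangle — intuitively, that a transitive set cannot bend around, because bending creates a $3$-cycle via the majority rule. Concretely, I expect to prove that if $X$ is acyclic then $X$ is totally ordered by the majority relation, and that this total order forces the points of $X$ to be monotone in a way that pins them to a line (or to one of the three coordinate families) of the triangle. The longest such monotone chain in $T_r$ has exactly $r$ points, since each side of the triangle and each parallel lattice line contains at most $r$ points. Combining the construction with this structural bound gives $a(T) \le r$ and hence $f_3(n) \le r$.

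The main obstacle will be the structural step: proving that every acyclic set is confined to a single direction and therefore has size at most $r$. The delicate part is handling the tie-breaking rules, since a careless secondary order could create long transitive sets that wander diagonally across the triangle; I would need to verify that whenever three points are mutually "spread" across the three coordinate directions, the majority rule produces a directed $3$-cycle, and then argue that avoiding all such $3$-cycles restricts $X$ to a monotone chain. Establishing this $3$-cycle criterion cleanly — reducing the majority relation to a simple comparison of coordinate sign patterns and showing the only way to stay transitive is to keep one coordinate fixed (or monotone) — is where the real work lies, and the bound of $r$ then follows from the elementary fact that each coordinate line in $T_r$ meets at most $r$ lattice points.
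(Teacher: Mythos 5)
Your construction is essentially the paper's: three lexicographic sweeps by $x$, $y$, $z$ with cyclically symmetric tie-breaking, so that the majority edge between two points is determined by which two of the three coordinate differences agree in sign (with the tie-break rules handling the cases where a coordinate is equal). The gap is in your structural step. It is simply not true that an acyclic set in this tournament must be monotone along a single coordinate direction or confined to one lattice line. Already for $r=3$ (points with $x+y+z=2$) the set $\{(1,1,0),\,(2,0,0),\,(0,1,1)\}$ is transitive: the tie-break rules give $(1,1,0)<(2,0,0)$ (equal $z$, increasing $x$) and $(1,1,0)<(0,1,1)$ (equal $y$, increasing $z$), while the majority rule gives $(2,0,0)<(0,1,1)$ (both $y$ and $z$ increase), so the three relations are consistent; yet these three points lie on no common lattice line. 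Maximum transitive sets in this tournament genuinely branch, so ``each line has at most $r$ points'' cannot be the reason the bound is $r$, and any argument that tries to force acyclic sets onto a single line will fail.

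What actually makes the count come out to $r$ is an induction exploiting the identity $x+y+z=r-1$. Take the dominant vertex $(x_1,y_1,z_1)$ of a transitive subtournament $D$ and draw the three lines through it parallel to the sides of $T_r$; this splits $T_r$ into three parallelograms (whose points all beat $(x_1,y_1,z_1)$, so they cannot occur in $D$) and three smaller equilateral triangles of side lengths $x_1-1$, $y_1-1$, $z_1-1$ (whose points are all beaten by $(x_1,y_1,z_1)$). The rest of $D$ lives in those three triangles, each of which induces a copy of the smaller construction, so by induction $|D|\le x_1+y_1+z_1+1=r$. Your proposal correctly identifies the construction and the need for a structural bound, but the specific structural claim you propose is false, and without the dominant-vertex decomposition and the additivity of the side lengths the bound $r$ does not follow.
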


\begin{proof}
Let $V=\{ (x, y, z) : x+y+z = r-1 \} $ be the set of $r(r+1)/2$
hexagonal lattice points in $T_r$.
Now we define three linear orders $\Pi =\{ \pi _1 , \pi _2, \pi _3\}$ on $V$ as follows:
$$\begin{array}{lllll}
(x_1, y_1, z_1) < (x_2, y_2, z_2) \mbox{ in } \pi _1 & \Longleftrightarrow  & x_1 < x_2 &\mbox{ or }& (x_1 =x_2 \mbox{ and } y_1 < y_2 ); \\
(x_1, y_1, z_1) < (x_2, y_2, z_2) \mbox{ in } \pi _2 & \Longleftrightarrow  & y_1 < y_2 &\mbox{ or }& (y_1 =y_2 \mbox{ and } z_1 < z_2 ); \\
(x_1, y_1, z_1) < (x_2, y_2, z_2) \mbox{ in } \pi _3 & \Longleftrightarrow  & z_1 < z_2 &\mbox{ or }& (z_1 =z_2 \mbox{ and } x_1 < x_2 ).
\end{array}$$
Since $\pi _1 , \pi _2, \pi _3$ are lexicographic orderings, they are all linear orders. Let $G_r$ be the $\{ \pi _1 , \pi _2, \pi _3\}$-majority tournament on $V$.
Then
$(x_1, y_1, z_1) < (x_2, y_2, z_2)$ in $G_r$ if and only if one of the following holds:
$$\begin{array}{llll}
1. & x_1 < x_2 & \mbox{and} & y_1 < y_2;\\
2. & y_1 < y_2 & \mbox{and} & z_1 < z_2;\\
3. & z_1 < z_2 & \mbox{and} & x_1 < x_2;\\
4. & x_1 = x_2 & \mbox{and} & y_1 < y_2;\\
5. & y_1 = y_2 & \mbox{and} & z_1 < z_2;\\
6. & z_1 = z_2 & \mbox{and} & x_1 < x_2.
\end{array}$$
Or, equivalently, the directions of the edges in
 $G_r$ (when viewing $V$ as hexagonal lattice points in $T_n$) can be displayed in a compass-like format.
(See Figure~\ref{Compass}.)
\begin{figure}[h]
\centering
\includegraphics[width = 2in, height = 2in]{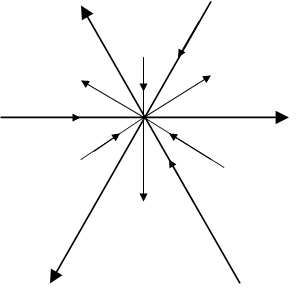}
\vspace{4mm}
\caption{Compass Showing Edge Directions in $G_r$}
\label{Compass}
\end{figure}

\vspace{4mm}

\begin{figure}[h]
\centering
\includegraphics[width = 2in, height = 1.7in]{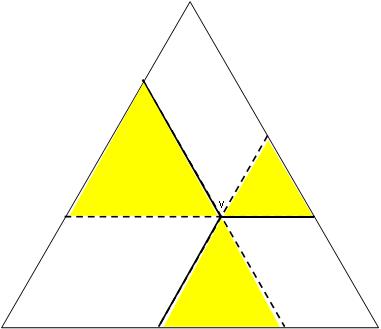}
\vspace{4mm}
\caption{Locations of the Remaining Vertices in the Sub-tournament}
\label{Induction}
\end{figure}

\vspace{2mm}

Claim 1: Any transitive sub-tournament of $G_r$ has at most $r$ vertices.

\vspace{4mm}

We will prove Claim 1 by induction on $r$. Obviously Claim 1 holds for $r=0$ and $1$.
Now suppose Claim 1 holds for all graphs $G_0, G_1, \ldots, G_{r-1}$.
Any transitive sub-tournament $D$ of $G_r$ has the ``biggest" vertex, say $(x_1, y_1, z_1)$,
which is adjacent to every other vertex in $D$. Through the point
$(x_1, y_1, z_1)$ in $T_r$ we draw three lines parallel to the sides, respectively.
Then $T_r$ is partitioned into three equilateral triangles and three parallelograms.
By the adjacency pattern shown in Figure~\ref{Compass}, the point $(x_1, y_1, z_1)$
is adjacent to every point in the three equilateral triangles (not including the dashed lines) and is adjacent from
every point in the three parallelograms (not including the solid lines inside $T_r$).
(See Figure \ref{Induction}.) Thus all
remaining vertices in $D$ must be contained in the three equilateral triangles shown with side lengths of $x_1-1, y_1-1,$ and $z_1-1$ (no vertices can appear on the dashed lines). However, these vertices in each individual equilateral triangle also form a transitive sub-tournament of the triangle;
that is, the sub-tournament of $G_{x_1}$, $G_{y_1}$, or $G_{z_1}$. Thus, by the induction hypothesis, the number of vertices in $D$ is at most $x_1+y_1+z_1 +1 = r$.
This proves Claim 1, from which Theorem~\ref{triangle} follows. 
\end{proof}

\begin{thm} \label{Main}
$f_3(n) < \sqrt {2n} +\frac 12. $
\end{thm}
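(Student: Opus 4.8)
The plan is to leverage Theorem~\ref{triangle} for the special values $n = r(r+1)/2$ and then handle arbitrary $n$ by a monotonicity argument, since $f_3$ is presumably nondecreasing (adding vertices can only increase the maximum acyclic set requirement, or more precisely we can pad up to the next triangular number). The first step is to fix an arbitrary positive integer $n$ and choose $r$ to be the smallest integer for which $r(r+1)/2 \ge n$. Equivalently, $r$ is the smallest integer with $N := r(r+1)/2 \ge n$, so that $(r-1)r/2 < n$.

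Next I would argue that $f_3(n) \le f_3(N)$. The cleanest way is to observe that a $k$-majority tournament on $n$ vertices can be obtained from one on $N \ge n$ vertices by deleting $N - n$ vertices (deleting a vertex from each of the three linear orders preserves the $k$-majority structure), and deleting vertices cannot increase the maximum order of an induced transitive sub-tournament. Hence the minimum over $n$-vertex tournaments is at most the minimum over $N$-vertex tournaments restricted to those arising this way; concretely, taking the extremal $G_r$ from Theorem~\ref{triangle} on $N$ vertices and deleting any $N-n$ of them yields an $n$-vertex $3$-majority tournament whose largest transitive sub-tournament has at most $r$ vertices. Therefore $f_3(n) \le r$.

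The final step is the arithmetic translating $r \le$ something into the bound $\sqrt{2n} + \tfrac12$. From $(r-1)r/2 < n$ we get $r^2 - r < 2n$, so $r < \tfrac12 + \sqrt{2n + \tfrac14}$ by the quadratic formula. It then suffices to check that $\tfrac12 + \sqrt{2n + \tfrac14} \le \tfrac12 + \sqrt{2n} + \ldots$—here I must be slightly careful, since $\sqrt{2n+\tfrac14}$ is a touch larger than $\sqrt{2n}$, so I would instead bound $r$ directly: from $r(r-1) < 2n$ I want to conclude $r < \sqrt{2n} + \tfrac12$, i.e. $\bigl(r - \tfrac12\bigr)^2 < 2n + \tfrac14$, which follows from $r^2 - r < 2n$ after adding $\tfrac14$ to both sides. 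Since $r$ is an integer and $f_3(n) \le r$, the strict inequality $r < \sqrt{2n} + \tfrac12$ gives $f_3(n) < \sqrt{2n} + \tfrac12$.

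The main obstacle I anticipate is the monotonicity claim $f_3(n) \le f_3(N)$: one must verify that deleting a vertex from a $3$-majority tournament yields another $3$-majority tournament (immediate, by restricting each $\pi_i$) and that the maximum acyclic set cannot grow under vertex deletion (also immediate, as an induced transitive sub-tournament of the smaller tournament lifts to one of the larger). Once this is granted, the rest is the routine quadratic estimate above, so the proof is short.
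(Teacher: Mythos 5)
Your overall route is the same as the paper's: round $n$ up to the next triangular number $N = r(r+1)/2$, invoke Theorem~\ref{triangle} to get $f_3(n) \le r$, and finish with a quadratic estimate. Your vertex-deletion justification of $f_3(n)\le f_3(N)$ is a correct, and in fact more explicit, version of the monotonicity step that the paper uses silently. The problem is in your final arithmetic. You assert that $r < \sqrt{2n} + \tfrac{1}{2}$ is ``i.e.'' $\bigl(r-\tfrac{1}{2}\bigr)^2 < 2n + \tfrac{1}{4}$; but for positive reals the inequality $r < \sqrt{2n}+\tfrac{1}{2}$ is equivalent to $\bigl(r-\tfrac{1}{2}\bigr)^2 < 2n$, not to $\bigl(r-\tfrac{1}{2}\bigr)^2 < 2n+\tfrac{1}{4}$. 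The latter only yields $r < \tfrac{1}{2} + \sqrt{2n+\tfrac{1}{4}}$, which is exactly the too-weak bound you flagged one sentence earlier and then reintroduced. So as written, the chain $r^2 - r < 2n \Rightarrow \bigl(r-\tfrac{1}{2}\bigr)^2 < 2n+\tfrac{1}{4} \Rightarrow r < \sqrt{2n}+\tfrac{1}{2}$ has an invalid second implication.

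The missing ingredient is integrality, and it is a one-line fix --- indeed it is exactly what the paper does. Since $r(r-1)/2$ and $n$ are both integers, the strict inequality $r(r-1)/2 < n$ upgrades to $r(r-1)/2 \le n-1$, hence $\bigl(r-\tfrac{1}{2}\bigr)^2 = r^2 - r + \tfrac{1}{4} \le 2n - \tfrac{7}{4} < 2n$, which gives $r < \sqrt{2n}+\tfrac{1}{2}$. (Equivalently: $r^2-r$ is an even integer, so $r^2 - r < 2n$ forces $r^2 - r \le 2n-2$.) With that substitution your argument is complete and coincides with the paper's proof.
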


\begin{proof}
Let $r$ be the unique integer satisfying $ r(r-1)/2 < n \le r(r+1)/2$.
Then $r(r-1)/2  \le n-1$ and thus $r < \sqrt {2n} +\frac 12.$
By Theorem \ref{triangle},
$$f_3(n) \le f(r(r+1)/2) \le r < \sqrt {2n} +\frac 12 . $$
\end{proof}

\vspace{.2cm}

{\bf Acknowledgement.} This is part of a research project done by three high school students (Alexandra Ilic, Lilly Shen, and Bobby Shen) in the summer of 2011
under the supervision of
Dr.~Jian Shen at Texas State University. Alexandra, Lilly, and Bobby
thank Texas State Honors Math Camp for providing this research opportunity.

\bibliographystyle{amsplain}

\end{document}